\documentclass{article}
\usepackage{commands}
\usepackage[utf8]{inputenc}
\usepackage[margin=1in]{geometry}
\usepackage{biblatex}
\usepackage{todonotes}
\addbibresource{main.bib}
\title{Computations associated with the resonance arrangement}
\author{Zachary Chroman, Mihir Singhal} 
\date{April 2021}

\begin{document}

\maketitle

\begin{abstract}
The \textit{resonance arrangement} $\mc A_n$ is the arrangement of hyperplanes in $\R^n$ given by all hyperplanes of the form $\sum_{i \in I} x_i = 0$, where $I$ is a nonempty subset of $\{1,\dots,n\}$. We consider the characteristic polynomial $\chi(\mc A_n; t)$ of the resonance arrangement, whose value $R_n$ at $-1$ is of particular interest, and corresponds to counts of generalized retarded functions in quantum field theory, among other things. No formula is known for either the characteristic polynomial or $R_n$, though $R_n$ has been computed up to $n=8$. By exploiting symmetry and using computational methods, we compute the characteristic polynomial of $\mc A_9$, and thus obtain $R_9$. The coefficients of the characteristic polynomial are also equal to the so-called \textit{Betti numbers} of the complexified hyperplane arrangement; that is, the coefficient of $t^{n-i}$ is denoted by the Betti number $b_i(\mc A_n)$. Explicit formulas are known for the Betti numbers up to $b_3(\mc A_n)$. Using computational methods, we also obtain an explicit formula for $b_4(\mc A_n)$, which gives the $t^{n-4}$ coefficient of the characteristic polynomial.
\end{abstract}

%TODONE source for all primes above 103: https://arxiv.org/pdf/math/0511636.pdf
\section{Introduction}
% \label{sec:intro}
% \begin{itemize}
%     \item This is the resonance arrangement
%     \item It's a hyperplane arrangement and therefore has these mysterious characteristic polynomials with some properties
%     \item The characteristic polynomial in particular allow you to compute the number of regions in the hyperplane (answers a question from earlier)
%     \item KTT (Kamiya et all) computes the first seven characteristic polynomials and Kuhne computes $b_k$ for $0 \leq k \leq 3$. We extend these results using [] methods to compute $\chi(\mathcal{A}_8,t)$, $\chi(\mathcal{A}_9,t)$, and $b_4(n)$ for all $n$. [State theorems]
%     \item To approach theorem 1 we define this object $G(n,d)$, which is related by [theorem]. We prove this in the next section. 
%     \item In order to compute $b_4$, we characterize the space of matrices satisfying the following properties
%     \item Using computational methods we are able to compute all such matrices, which leads us to [theorem]
% \end{itemize}

In this paper we are primarily concerned with an arrangement of hyperplanes known as the resonance arrangement:
\begin{defn}
The \textit{resonance arrangement} $\mc A_n$, for integer $n>0$, is the hyperplane arrangement in $\R^n$ (or occasionally $\C^n$) defined by the equations \[\mathcal{A}_n=\left\{\sum_{i \in I}x_i = 0 \mathrel{\Big|} I \subseteq \{1,\dots,n\}\right\}.\] Equivalently, it is the collection of hyperplanes whose normal vectors are nonzero with entries in $\{0,1\}$.
Moreover, we denote by $R_n$ the number of connected regions in the set obtained from $\R^n$ by removing the solution sets of all the equations of $\mc A_n$. 
\end{defn}

The resonance arrangement has also been known as the adjoint of the braid arrangement, or the all-subsets arrangement \cite{Kuhne, KTT}. It has been studied in relation to algebraic geometry, representation theory, and mathematical physics, among other topics. A more complete history of the resonance arrangement can be found in in \cite{Kuhne}. 

% In this paper we detail the computation of the 9th element of the OEIS sequence A034997, a sequence which appears in multiple context, including quantum field theory where it counts the number of generalized retarded functions \cite{oeis}. We denote the sequence $\mathcal{R}_n$, following \cite{Kuhne}. Primarily, it corresponds to the number of connected regions in the complement of the real \textit{resonance arrangement} $\mathcal{A}_n$, a set of hyperplanes defined by the equations \[\mathcal{A}_n=\left\{\sum_{i \in I}x_i = 0 \mathrel{\Big|} I \subseteq \{1,\dots,n\}\right\}.\] 

To any arrangement of hyperplanes we can associate a \textit{characteristic polynomial}, which is a standard notion in the theory of hyperplane arrangements, given for example in \cite{RS}. % TODONE stanley book
\begin{defn}
The \textit{characteristic polynomial} of a hyperplane arrangement $\mathcal{A}$ over a field is given by 
\begin{equation} \label{eq:charpoly}
\chi(\mathcal{A};t) = \sum_{T \subseteq \mathcal{A}} (-1)^{|T|} t^{r(\mathcal{A})-r(T)},
\end{equation}
where $r(T)$ denotes the codimension of the intersection of the hyperplanes in $T$.
\end{defn}
It is a theorem of Zaslavsky that $R_n$ is given by the sum of the absolute values of the coefficients of $\chi(\mathcal{A};t)$ (or equivalently, the value $\chi(\mathcal{A};-1)$) \cite{Zaslavsky}. In particular, to compute the sequence $R_n$ it suffices to compute the polynomials $\chi(\mathcal{A}_n;t)$. Unfortunately, these polynomials are considered equally intractable \cite{Kuhne}. 

The characteristic polynomials $\chi(\mathcal{A}_n;t)$ for $n$ at most $7$ were computed in \cite{KTT}. We will compute the next two, and therefore $R_8$ and $R_9$. The polynomial $\chi(\mathcal{A}_9;t)$ has also been computed independently and concurrently by Brysiewicz, Eble, and K\"uhne in \cite{bek}, using different methods. A listing of all the characteristic polynomials is provided in \cref{fig:polys}.

\begin{thm} \label{thm:poly}
The characteristic polynomials of $\mc A_8$ and $\mc A_9$ are given by
\begin{multline*}
\chi(\mathcal{A}_8;t) = t^8-255 t^7+29360 t^6-1957200 t^5+81029004 t^4-2076831708 t^3 \\ {}+30623870732
  t^2-207507589302 t+178881449368,
\end{multline*}
\begin{multline*}
\chi(\mathcal{A}_9;t) = 
t^9-511 t^8+120975 t^7-17153460 t^6+1582492380 t^5-96834110730 t^4\\ {}+3829831100340
   t^3-89702833260450 t^2+973784079284874 t-887815808473419,
\end{multline*}
and their corresponding region counts are given by
\begin{equation*}
R_8 = 419172756930, \qquad R_9 = 1955230985997140.
\end{equation*}
\end{thm}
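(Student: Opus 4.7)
The theorem asserts the value of two explicit polynomials, so the proof is inherently computational. The plan is to evaluate $\chi(\mc A_n;t)$ directly for $n \in \{8,9\}$ using the intersection-lattice formulation together with heavy use of symmetry, and then to read off $R_n$ via Zaslavsky's theorem.

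The starting point is the standard rewriting
\[
\chi(\mc A_n;t) \;=\; \sum_{X \in L(\mc A_n)} \mu(\hat 0,X)\, t^{\dim X},
\]
where $L(\mc A_n)$ is the intersection lattice of flats and $\mu$ is its M\"obius function. Direct enumeration is infeasible for $n=9$, since $|L(\mc A_9)|$ is huge. The critical reduction is the action of the symmetric group $S_n$ on $\mc A_n$ by coordinate permutation: this preserves the arrangement and hence the flat lattice, and both $\dim X$ and $\mu(\hat 0,X)$ are constant on $S_n$-orbits. Grouping by orbits gives
\[
\chi(\mc A_n;t) \;=\; \sum_{[X] \in L(\mc A_n)/S_n} |S_n \cdot X|\; \mu(\hat 0,X)\; t^{\dim X},
\]
which reduces the computation to enumerating orbit representatives and computing one M\"obius value and one orbit size per representative.

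The first step is to enumerate $S_n$-orbit representatives of flats. Each flat is determined by the span of a set of $\{0,1\}$-normal vectors to the hyperplanes containing it, so flats can be encoded combinatorially as families of nonempty subsets of $[n]$, and orbits can be built bottom-up by taking closures under span and filtering by a canonical form to avoid double-counting. The second step is to compute the M\"obius values: walk through the orbit poset in increasing rank, applying $\mu(\hat 0,X) = -\sum_{\hat 0 \le Y < X}\mu(\hat 0,Y)$, where the sum is reorganized over lower orbit representatives $[Y_0]$ weighted by $|\{Y \in [Y_0] : Y < X\}|$. Orbit sizes come from stabilizer orders via orbit-stabilizer.

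The main obstacle is engineering rather than mathematical: even after symmetry reduction the number of orbit representatives for $n=9$ is very large, and one must efficiently test flat comparability and compute the weighted incidence counts appearing in the M\"obius recursion. Efficient canonical forms for $S_n$-equivalence, good indexing of the orbit poset, and likely parallelization will be required to make the $n=9$ run feasible. Once $\chi(\mc A_8;t)$ and $\chi(\mc A_9;t)$ are obtained, Zaslavsky's theorem yields $R_8$ and $R_9$ by evaluating at $t=-1$ and taking absolute values, and the polynomial $\chi(\mc A_8;t)$ can additionally be cross-checked against the independently known value from \cite{KTT} extended by one degree.
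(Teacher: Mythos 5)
Your proposal is mathematically sound but follows a genuinely different route from the paper. You compute $\chi(\mc A_n;t)$ directly from the intersection lattice via the M\"obius-function expansion, taming the enumeration with the $S_n$-action on coordinates; the paper instead uses the finite field method. Concretely, the paper shows (\cref{thm:G=chi}) that for primes $p$ large enough that no $d\times d$ 0--1 matrix has determinant a nonzero multiple of $p$ (here $p\ge 103$ for $d=9$), $\chi(\mc A_d;p)$ equals the number of tuples in $\F_p^d$ with no nonempty zero subset sum; it then counts these tuples for six primes by a recursion organized by element multiplicities, using the $\F_p^\times$ scaling action (rather than $S_n$) to cut the work by a factor of roughly $p-1$, and finally recovers the polynomial by interpolation against the known coefficients $b_0,\dots,b_3$ from \eqref{eq:0-3} and $b_4$ from \cref{thm:b4}. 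Your approach buys an exact, single-pass computation with no interpolation and no hypothesis on primes --- and it is essentially the method of the independent computation in \cite{bek}, so its feasibility for $n=9$ is established --- but it requires building and traversing the (very large) orbit poset of flats, computing weighted incidence counts for the M\"obius recursion, and certifying canonical forms under $S_9$, all of which is substantially heavier machinery than counting subset-sum-free tuples. The paper's method trades that structural work for six long but conceptually simple counts plus the prior knowledge of the top five coefficients. One small correction: $\chi(\mc A_8;t)$ is not available from \cite{KTT}, which stops at $n=7$; the cross-check for both $n=8$ and $n=9$ is against \cite{bek}. Both routes end the same way, evaluating at $t=-1$ via Zaslavsky's theorem to obtain $R_8$ and $R_9$.
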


These results match the independently computed values in \cite[Table 5]{bek}.

In order to show \cref{thm:poly}, we will first show in \cref{sec:bij} that the value of $\chi(\mc A_n; p)$ when $p$ is a sufficiently large prime can be computed by counting a certain type of combinatorial object. We then use computational methods, in addition to exploiting symmetry, to count these objects for various values of $p$ and therefore obtain the polynomials by interpolation. This computation is detailed in \cref{sec:seqs}.

Specific coefficients of the characteristic polynomial are of particular interest. We denote the $t^{n-i}$ coefficient of $\chi(\mathcal{A}_n;t)$ by $b_i(\mathcal{A}_n)$, as these coefficients correspond to the topological Betti numbers of the complexified hyperplane arrangement \cite{Kuhne}. It is shown in \cite{Kuhne} that these $b_i(\mathcal{A}_n)$ for a fixed $i>0$ can be computed as a sum of Stirling numbers of the second kind, which in particular implies they can be written as a linear combination \[b_i(\mathcal{A}_n) = \sum_{j=1}^{2^i} c_{i,j} j^n.\]  The following formulas for $b_i(\mathcal{A}_n)$ for $0 \leq i \leq 3$ and $n>0$ are given in \cite{Kuhne}:

% We will derive $\chi(\mathcal{A}_9;t)$ by computing its value at certain primes using combinatorial methods, and interpolating. It will be helpful to establish the first four coefficients of $\chi(\mathcal{A}_n;t)$, which are derived in \cite{Kuhne}. We denote the $t^{n-i}$ coefficient of $\chi(\mathcal{A}_n;t)$ by $b_i(\mathcal{A}_n)$, as these coefficients correspond to the topological Betti numbers of the complexified hyperplane arrangement \cite{Kuhne}. The following formulas for $b_i(\mathcal{A}_n)$ for $0 \leq i \leq 3$ are given in \cite{Kuhne}.
\begin{equation} \label{eq:0-3}
    \begin{aligned}
    &b_0(\mathcal{A}_n) = 1 \\
    &b_1(\mathcal{A}_n) = 2^n-1 \\ 
    &b_2(\mathcal{A}_n) = \frac 12 (4^n-3^n-2^n+1)\\
    &b_3(\mathcal{A}_n) = \frac{1}{24}\left(4 \cdot 8^n-15 \cdot 6^n + 15 \cdot 5^n -14 \cdot 4^n +18 \cdot 3^n -7 \cdot 2^n -1\right).
    \end{aligned}
\end{equation}

Using computational methods, we extend these results to compute a formula for $b_4(\mathcal{A}_n)$:
\begin{thm}
\label{thm:b4}
For all $n \ge 0$, \begin{multline*}
b_4(\mc A_n) = \f{1}{48} (2 \cdot 16^n - 18 \cdot 12^n + 31 \cdot 10^n + 33 \cdot 9^n - 119 \cdot 8^n + 77 \cdot 7^n \\ + 77 \cdot 6^n - 173 \cdot 5^n + 155 \cdot 4^n - 113 \cdot 3^n + 64 \cdot 2^n - 16).
\end{multline*}
\end{thm}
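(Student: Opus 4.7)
The strategy is to invoke the structural result of Kühne mentioned in the excerpt: for fixed $i > 0$, the function $n \mapsto b_i(\mathcal{A}_n)$ has the form $\sum_{j=1}^{2^i} c_{i,j} j^n$ for some rational constants $c_{i,j}$. Specializing to $i = 4$ leaves at most $16$ unknown constants $c_{4,1}, \ldots, c_{4,16}$. Since the exponential functions $n \mapsto j^n$ for distinct positive integers $j$ are linearly independent on $\mathbb{Z}_{\geq 0}$, any $16$ distinct evaluations determine the $c_{4,j}$ uniquely, so the theorem reduces to an interpolation problem.

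Thus the task is to compute $b_4(\mathcal{A}_n)$ for $16$ values of $n$. For $n$ between $4$ and $9$, these values are immediately available: they are the $t^{n-4}$ coefficients of the characteristic polynomials recorded in \cref{thm:poly} and in \cite{KTT}. For larger $n$, running the full algorithm of \cref{sec:seqs} through $n = 16$ would be prohibitively expensive. A much cheaper approach is to exploit the fact that $b_4(\mathcal{A}_n)$ depends only on the rank-$\leq 4$ portion of the intersection lattice; explicitly, $b_4(\mathcal{A}_n) = \sum_X |\mu(\hat{0}, X)|$ where $X$ ranges over rank-$4$ flats of $\mathcal{A}_n$. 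Using the $S_n$-action on coordinate labels — the same symmetry heavily exploited in the computation of \cref{thm:poly} — one can enumerate such flats up to relabeling and thereby compute $b_4(\mathcal{A}_n)$ efficiently for all $n$ up to (and well beyond) $16$.

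The main obstacle is the combinatorial classification of rank-$4$ flats up to the $S_n$-action together with the computation of their Möbius values, which requires a careful case analysis but is bounded in complexity independent of $n$. Once $16$ values are in hand, solving the resulting $16 \times 16$ linear system (with matrix entries $j^{n_k}$) in exact rational arithmetic yields the claimed coefficients. Verification is straightforward: compute $b_4(\mathcal{A}_n)$ for one or two additional values of $n$ and check agreement with the formula. The structural features of the final answer — in particular, the vanishing of the coefficients of $11^n$, $13^n$, $14^n$, and $15^n$, and the small integer coefficients after clearing the denominator $48$ — serve as informal corroboration that no arithmetic errors have crept in.
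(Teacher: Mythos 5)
Your strategy is sound in outline but genuinely different from the paper's, and it has one soft spot worth naming precisely. The paper does not interpolate at all for $b_4$: starting from \eqref{eq:charpoly}, it classifies all ``good sets'' --- rank-$4$ collections of $0$--$1$ row vectors with distinct rows, distinct columns, and no zero column --- and proves by a pigeonhole argument that any such configuration has at most $2^4=16$ distinct rows and at most $16$ distinct columns, so that there are only finitely many of them. Summing the inclusion--exclusion count of $n$-row matrices over each good set, as in \eqref{eq:coeffsum}, then yields the closed form $\sum_j c_{4,j}\,j^n$ directly and simultaneously for all $n$; the computer is used only to enumerate the good sets. Your route instead takes the shape of the answer from K\"uhne's structural result and reduces the theorem to producing $16$ data points. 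That reduction is legitimate (for consecutive values of $n$ the system is Vandermonde, and in general an exponential sum $\sum_{j=1}^{16}c_j j^n$ with distinct positive bases has at most $15$ integer zeros), and the values for $n\le 9$ are indeed available from \cref{thm:poly} and \cite{KTT}. The gap lies in the remaining six values: your claim that the classification of rank-$4$ flats up to the $S_n$-action is ``bounded in complexity independent of $n$'' is exactly the nontrivial finiteness statement above, which you assert rather than prove. A priori the number of $S_n$-orbits of rank-$4$ flats grows with $n$ (it already does for the hyperplanes themselves, whose orbits are indexed by $|I|=1,\dots,n$); what is actually finite is the number of \emph{reduced} types, i.e.\ patterns of distinct column classes, and establishing that requires the $2^k$ bound. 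Once you supply that argument and carry out the classification with its M\"obius (or inclusion--exclusion) weights, the count of flats of each reduced type is already an explicit exponential sum in $n$, so the closed form drops out for all $n$ and the interpolation layer becomes redundant. In short: the approach can be completed, but the work deferred to your ``careful case analysis'' is essentially the entire content of the paper's proof, and the interpolation scaffolding buys only a consistency check (albeit a genuinely useful one, as the paper itself notes when it uses \cref{thm:b4} to eliminate one prime from the computation in \cref{sec:seqs}).
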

The proof of this result is in \cref{sec:b4}, where we define matrices corresponding to sets of hyperplanes, and again exploit symmetry in order to use computational techniques to effectively compute the coefficients $c_{4,j}$.
    
\section{Combinatorial interpretation of the characteristic polynomial} \label{sec:bij}
We will approach \cref{thm:poly} by computing the values of $\chi(\mc A_8;t)$ and $\chi(\mc A_9;t)$ at certain primes using combinatorial methods, and interpolating. To perform these computations, we first define a related object. 
\begin{defn}
We define a \textit{$(p,d)$-sequence} to be any element of $\F_p^d$ for which no nonempty subset sums to zero. We let $G(p,d)$ denote the set of such sequences.
\end{defn}
The connection between $G(p,d)$ and $\mathcal{A}_n$ is made apparent by the following relationship:
{
\begin{thm}
For any fixed $d$, $|G(p,d)| = \chi(\mathcal{A}_d;p)$ for all $p$ such that no 0--1 matrix of size $d$ has determinant equal to a nonzero multiple of $p$.
\label{thm:G=chi}
\end{thm}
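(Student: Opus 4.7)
The plan is to apply the standard finite field method for characteristic polynomials. For any integral hyperplane arrangement $\mc A$ in $\R^d$ and any prime $p$ for which reduction mod $p$ preserves the intersection lattice, one has $\chi(\mc A; p) = |\F_p^d \setminus \bigcup_{H \in \mc A} \bar H|$, where $\bar H$ denotes the reduction of $H$ mod $p$. This is a direct consequence of Möbius inversion on the intersection lattice together with inclusion--exclusion, and I would either cite it or reproduce the short argument.

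Specializing to the resonance arrangement, the reduced hyperplane corresponding to $I \subseteq \{1,\dots,d\}$ is $\{\sum_{i \in I} x_i \equiv 0 \pmod p\}$, and a tuple $(x_1,\dots,x_d) \in \F_p^d$ lies off this hyperplane iff $\sum_{i \in I} x_i \ne 0$ in $\F_p$. Therefore the set of $p$-tuples in $\F_p^d$ avoiding \emph{every} reduced hyperplane is, by definition, precisely $G(p,d)$. This immediately identifies $|G(p,d)| = \chi(\mc A_d;p)$ as soon as the intersection lattice is preserved modulo $p$.

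The only remaining task is to show that the stated hypothesis on $p$ is exactly the intersection-lattice-preservation condition. The intersection lattice is determined by the rank function on subsets of normal vectors. Since the 0--1 normals are nonzero mod $p$ and ranks can only drop upon reduction, lattice preservation is equivalent to requiring that no collection of $\mathbb{Q}$-linearly independent 0--1 vectors becomes $\F_p$-linearly dependent; equivalently, that no $k \times k$ minor of a 0--1 matrix is a nonzero multiple of $p$.

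The main---and essentially only nontrivial---step is to check that this \emph{a priori} stronger condition (over all $k \le d$) in fact follows from the hypothesis about $d \times d$ 0--1 matrices alone. For this I would observe that any $k \times k$ 0--1 matrix $M$ with $\det M \not\equiv 0 \pmod p$ nonzero extends to a $d \times d$ 0--1 matrix $\begin{pmatrix} M & 0 \\ 0 & I_{d-k}\end{pmatrix}$ with the same determinant, so ruling out bad $d \times d$ determinants rules out bad $k \times k$ ones. Combining this with the finite field identification gives the theorem. The hard part is not any single step, but being careful in spelling out the equivalence between lattice preservation and the minor condition; the rest is a direct application of standard machinery.
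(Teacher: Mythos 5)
Your proof is correct and takes essentially the same route as the paper: both arguments come down to counting the points of $\F_p^d$ off all the reduced hyperplanes via inclusion--exclusion and showing that the rank of any subset of 0--1 normal vectors is the same over $\Q$ as over $\F_p$ under the stated hypothesis. Your explicit padding argument reducing $k \times k$ minors to $d \times d$ determinants via a block matrix is a detail the paper leaves implicit, and is a worthwhile addition.
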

\begin{proof}
For each subset $S \subset \{0,1\}^{[d]}$, let $f(S)$ denote the number of $d$-tuples $v$ which satisfy $v \cdot x = 0$ for each vector $x \in S$; that is, each designated subset sum is zero. We want to find $f(\{0,1\}^{[d]} \setminus \{(0, \dots, 0)\})$. By the Principle of Inclusion-Exclusion, this is the sum over all subsets of $k$ hyperplanes of $(-1)^k$ times the number of points in the intersection of the $k$ hyperplanes, over all $k \leq d$. 

The number of points in the intersection of hyperplanes is given by $n^{d-r}$, where $r$ is the rank over $\F_p$ of the 0--1 matrix $A$ with rows given by the coordinate vectors of the hyperplanes. We claim that $r$ is also the rank of $A$ over $\Q$. Indeed, since all determinants of minors are $0$ mod $p$ if and only if they are $0$ in $\Z$, the rank must be the same. Thus there are $p^{d-r}$ solutions, and therefore $|G(p,d)|$ is a polynomial in $p$, given by \[\sum_{S \subseteq \mathcal{A}_d}p^{d-r(S)}.\] This is exactly the definition of $\chi(\mathcal{A}_d;p)$. 

\end{proof}}

% By the theory of hyperplane arrangements (see e.g. \cite{RS}), we know that for sufficiently large primes $p$, $|G(p,d)| = \chi(\mathcal{A}_n;p)$. In particular, if no 0--1 matrix of size $d$ has determinant equal to a nonzero multiple of $p$, this equality holds. 

As seen in \cite{103}, for $d=9$ the condition of \cref{thm:G=chi} holds for all $p$ at least $103$. Having established the four leading coefficients in \eqref{eq:0-3}, it suffices to compute six more coefficients of $\chi(\mathcal{A}_9;t)$ and five more of $\chi(\mathcal{A}_8;t)$. Thus it will suffice to compute $|G(p,9)|$ for $6$ primes which are at least $103$. Note that it is possible for a determinant of a 9 by 9 0--1 matrix to be as large as 144, but it will not be divisible by any prime number which is at least 103. In fact, our approach would likely become infeasible if 144 were the best possible lower bound.

In the next section we perform this computation for $p \in \{103,107,109,113,127,131\}$. We ran a similar computation starting at $p=41$ (per \cite{103}) to compute $\chi(\mathcal{A}_8;t)$.

\section{Counting the \texorpdfstring{$(p, d)$}{(p, d)}-sequences} \label{sec:seqs}
In this section we describe the algorithm used to count $|G(p, d)|$, the number of $(p, d)$-sequences, for prime $p$. For each sequence $A$, let $c(A)$ be the count of the most frequent element of $A$, and let $n_k(A)$ be the number of elements in $A$ which occur $k$ times.

We will use recursion to first do casework on the number of 1s in the sequence, then the number of 2s, and so on. By itself, this algorithm would not run quickly enough. In order to improve the running time, we will also scale the sequence by a nonzero constant mod $p$ so that $1$ is the most frequent element of the sequence, and count sequences up to this scaling, multiplying the final answer by $p-1$ to account for the constant factor. For each sequence $A$, there are $n_{c(A)}(A)$ possible choices of this constant factor, so we will need to count each scaled sequence $A$ with multiplicity $1/n_{c(A)}(A)$ to account for overcounting (note that the value of $n_{c(A)}(A)$ is invariant under the scaling).

Now, we will define a function which we will be able to evaluate easily. We define the set $f_A(p, d', x)$, where $A$ is a sequence in $\F_p$, and $x \ge 2$, as follows. Let $c$ be the number of occurrences of $1$ in $A$. If $B$ is a sequence in $\F_p$ of length $d'$, we say that it is \textit{nice} if it consists only of elements between $x$ and $p-1$, inclusive, with no more than $c$ of any such element, such that the concatenation $A ^\frown B$ has no subset sums equal to zero. Then, $f_A(p, d', x)$ is the total count of nice sequences $B$, where each sequence is counted with multiplicity $1/n_c(A ^\frown B)$.

Note that we can evaluate the function $f_A(p, d', x)$ recursively. In particular, if there are $i$ copies of $x$ in $B$, then there are $\binom{d'}{i}$ ways to pick the positions of $i$ in $B$, and the number of ways to pick the rest of the elements of $B$ is equal to $f_{A ^\frown (x)^i}(p, d'-i, x+1)$. Note that $i$ is bounded by $c$, and is also bounded by the maximum number $i_\text{max}$ such that $A ^\frown (x)^{i_\text{max}}$ has no zero sums. (Note that $i_\text{max}$ is just the minimum possible value such that $-(i_\text{max}+1)x$ is a subset sum of $A$.) Thus, by casework on possible values of $i$, we have

\[f_A(p, d', x) = \sum_{x=1}^{\min\{d', c, i_\text{max}\}} \binom{d'}{i} f_{A ^\frown (x)^i}(p, d'-i, x+1).\]

Finally, we may recover $|G(p, d)|$ by casework on the number of 1s in the sequence (noting that there are once again $\binom{d}{c}$ ways to pick the positions of the 1s):
\[|G(p, d)| = \sum_{c=1}^d \binom{d}{c} f_{(i)^c}.\]

Now, we will speed up computation by noting that instead of keeping track of $A$, we only need to keep track of $S$, the set of subset sums of $A$, in addition to $c$ and $n_c(A)$. When we append $x$ to $A$, we just replace $S$ with $S \cup (S+x)$. It is easy to check when we have hit the limit $i_\text{max}$, by checking presence of $-x$ in $S$ after $i-1$ copies of $x$ have already been added. Note that $c$ stays constant throughout the recursion, and $n_c(A)$ gets incremented by one whenever $i=c$.

We used Python to compute the value of $|G(p, d)|$ for $d=9$ and $p \in \{103, 107, 109, 113, 127, 131\}$ (In fact, if we use \cref{thm:b4}, we do not need $p=131$, but we compute it for completeness). The code is available here: \url{https://github.com/mihirsinghal/resonance-arrangement/}. Using PyPy 3, it took about a week to run on a personal laptop. The computed values of $|G(p,9)|$ are given in \cref{fig:G9}.
\begin{table}[ht]
\[\begin{array}{|c|c|}\hline
p & |G(p,9)|\\ \hline
 103 & 7222699176517740 \\ \hline
 107 & 12055973486434780 \\ \hline
 109 & 15451254517025520 \\ \hline
 113 & 24997023634629760 \\ \hline
 127 & 116648572972479660 \\ \hline
 131 & 174658244425697500 \\ \hline
\end{array}
\]
\caption{Computed values of $|G(p, 9)|$ for specific values of $p$}
\label{fig:G9}
\end{table}
Combined with the known coefficients given by \eqref{eq:0-3}, we can interpolate to obtain the polynomial $\chi(\mc A_9; t)$. We provide this polynomial (as well as all previous polynomials) in \cref{fig:polys}.

\begin{table}[ht]
    \centering
\[
\begin{array}{|c|l|}\hline
 \chi(\mathcal{A}_0;t) & 1 \\ \hline
 \chi(\mathcal{A}_1;t)  & t-1 \\ \hline
 \chi(\mathcal{A}_2;t)  & t^2-3 t+2 \\ \hline
 \chi(\mathcal{A}_3;t)  & t^3-7 t^2+15 t-9 \\ \hline
 \chi(\mathcal{A}_4;t)  & t^4-15 t^3+80 t^2-170 t+104 \\ \hline
 \chi(\mathcal{A}_5;t)  & t^5-31 t^4+375 t^3-2130 t^2+5270 t-3485 \\ \hline
 \chi(\mathcal{A}_6;t)  & t^6-63 t^5+1652 t^4-22435 t^3+159460 t^2-510524 t+371909 \\ \hline
 \chi(\mathcal{A}_7;t)  & t^7-127 t^6+7035 t^5-215439 t^4+3831835 t^3-37769977 t^2+169824305 t-135677633 \\ \hline
 \multirow{2}{*}{$\chi(\mathcal{A}_8;t)$}  & t^8-255 t^7+29360 t^6-1957200 t^5+81029004 t^4-2076831708 t^3 \\ & \quad {}+30623870732
  t^2-207507589302 t+178881449368 \\ \hline
 \multirow{2}{*}{$\chi(\mathcal{A}_9;t)$}  & t^9-511 t^8+120975 t^7-17153460 t^6+1582492380 t^5-96834110730 t^4\\ & \quad {}+3829831100340
   t^3-89702833260450 t^2+973784079284874 t-887815808473419
    \\ \hline
\end{array}
\]
    \caption{Polynomials $\chi(\mc A_n; t)$ for $n \le 9$}
    \label{fig:polys}
\end{table}

\section{Computation of \texorpdfstring{$b_4(\mc A_n)$}{b4(An)}} \label{sec:b4}

In this section we will compute the coefficient of $t^{n-4}$ in $\chi(\mathcal{A}_n;t)$. Following notation in \cite{Kuhne}, we denote by $b_k(\mathcal{A}_n)$ the coefficient of $t^{n-k}$. Exact formulas for $b_0$ through $b_3$ are computed by \cite{Kuhne} and reproduced in \eqref{eq:0-3}, and we will extend this to $b_4$ (i.e., the case $k=4$).

In \eqref{eq:charpoly}, we see that to compute $b_4(\mathcal{A}_n)$ we need to compute the number of subsets of $\mathcal{A}_n$ whose intersection has rank $k$. Equivalently, we want to find 0--1 matrices $M$ satisfying the following properties: 
\begin{itemize}
    \item $M$ has rank $k$
    \item $M$ has exactly $n$ rows
    \item No two columns of $M$ are identical
    \item No column of $M$ is identically zero
\end{itemize}
Since the columns of these matrices represent hyperplanes, and sets of hyperplanes are considered up to rearrangement, we wish to count each such matrix with multiplicity $\f1{\ell!}$, where $\ell$ is the number of columns in each matrix.

Say a given set $S$ of (distinct) 0--1 vectors each of length $\ell$ is \emph{good} if, for each pair of positions $1\leq i<j\leq \ell$, at least one of the vectors is not the same in the $i$-th and $j$-th position, there is no position $1 \le i \le \ell$ such that every vector is zero in the $i$-th position, and the matrix with $S$ as its rows has rank $k$. 

Any matrix $M$ satisfies the required conditions iff the set of its distinct rows is good. For a good set of size $s$, the number of matrices $M$ is given by 
\[\sum_{i \leq s} (-1)^{|S|-i} i^n \binom si,\] 
by the Principle of Inclusion-Exclusion. Recall that each set $T$ in \eqref{eq:charpoly} corresponds to $\ell!$ matrices $M$. Thus, the coefficent of $t^{n-k}$ is equal to
\begin{equation} \label{eq:coeffsum}
    b_k(\mc A_n) = \sum_{\text{good $S$}}\f{(-1)^{\ell}}{\ell!}\sum_{i \leq |S|} (-1)^{|S|-i} i^n \binom{|S|}i.
\end{equation}
As an example, the only good sets for $k=1$ are $\{(1)\}$ and $\{(0),(1)\}$, so the coefficient of $t^{n-1}$ is \[\frac{(-1)^1}{1!}\left((-1)^{0}i^n\binom 11\right)+\frac{(-1)^1}{1!}\sum_{1\leq i\leq 2} (-1)^{2-i} i^n \binom 2i = -1+2-2^n=1-2^n,\] which is indeed the coefficient of $t^{n-1}$ in the observed polynomials. To verify the formula given for the $t^{n-2}$ coefficient, it suffices to consider all $17$ possible good sets of rank $2$ ($11$ with $\ell=2$ and $6$ with $\ell=3$). 

In general, we claim that there are only finitely many good sets for fixed $k$. To see this, we'll show that if a matrix with no two identical rows and no two identical columns has at least $2^k+1$ rows (or columns), it must have rank at least $k+1$. Such a matrix with rank at most $k$ would have a set of $k$ columns which generate all the rows. But by the pigeonhole principle two rows must be the same on all $k$ of these columns. Since the columns generate, the rows must be the same on every column, which is a contradiction. 

This implies both that $\ell \leq 2^k$ and also that any good set has size at most $2^k$. In particular, there are only finitely many possible good sets. 

We will now describe the algorithm which we use to compute the number of good sets with $r$ vectors of length $\ell$, for every particular pair of values $r, \ell \le 2^k$. Notice that the zero vector can be added to any good set, so it suffices to be able to count only the good sets which do not contain the zero vector. Note that we can let all the vectors in such a good set be the rows of a matrix $Y$; the $r \times \ell$ matrix $Y$ then satisfies the following conditions.
\begin{itemize}
    \item $Y$ has rank $k$.
    \item No two columns of $Y$ are identical, and no two rows of $Y$ are identical.
    \item No row or column of $Y$ is identically zero.
\end{itemize}
Since all elements of a good set are distinct, each good set corresponds to $r!$ such matrices $Y$, so it is enough to count such matrices $Y$. Additionally, note that these conditions are identical when transposing $Y$, so we can assume that $r \ge \ell$.

Note that the conditions on $Y$ are also preserved when permuting the rows and/or columns of $Y$. Thus, we will first generate all possible matrices $Y$ up to row and column permutation (with possible duplicates). We will then generate all matrices that can be obtained by permuting columns of such matrices, but assuming that the rows are in (lexicographically) sorted order. Then, note that the total number of matrices $Y$ can be obtained by just multiplying by $r!$ to account for row order, since all rows are distinct.

Now we describe how to generate the matrices $Y$ up to row and column permutation, specializing to the particular case $k=4$. We can assume, by column permutation, that the matrix $X$ given by the first 4 columns of $Y$ has rank 4. Note that all columns of $Y$ must be in the span of the columns of $X$. Thus, if $X$ were to contain two identical rows, then these rows would be identical in $Y$ as well. This would contradict the second condition on $Y$, so $X$ must have distinct rows.

Therefore, there are only $2^{15}$ possibilities for $X$, up to row permutation, since the rows of $X$ are all distinct nonzero 0--1 vectors of length 4. $X$ must also contain $k$ linearly independent rows. We generate all such sets of linearly independent rows to obtain all possible $X$ which work. We additionally permute the rows of $X$ so that its first 4 rows are linearly independent; let $B$ be the $4 \times 4$ matrix given by these rows. 

Now, note that $Y$ must be obtained from $X$ by adding column vectors which are in the span of the 4 column vectors of $X$, but maintaining that all columns of $Y$ are distinct. We will thus enumerate the 0--1 column vectors $u$ which are in the span of the columns of $X$ (and are not already in $X$). Note that these are exactly the 0--1 vectors which can be written as $Xw$, for a 4-element column vector $w$. Let $v$ be the vector consisting of the top 4 elements of $u$. Then, taking the top 4 rows of the identity $Xw=u$, we have that $Bw=v$. But $B$ is invertible, so we can thus write $w=B^{-1}v$ and thus that $u = XB^{-1} v$. Now, note that $v$ cannot be any of the columns of $B$ since that would make $u$ be a column of $X$, and it also cannot be zero since that would also make $u$ zero. Thus, there are 11 possibilities for the 0--1 vector $v$. We can then plug in all 11 possibilities and check if the resulting $u$ is a 0--1 vector. 

In this way, we can determine all columns $u$ which can be added to $X$. By adding subsets of such columns $u$, we can thus obtain all possibilities for $Y$ (still up to row and column permutation). For all such possibilities $Y$, we can, as previously described, generate all column permutations, such that the rows are always sorted, to obtain all possibilities for $Y$ up to row permutation, with no duplicates (since the rows are kept sorted). Thus we can obtain the number of such matrices $Y$. After adjusting for the assumptions that there are no zero rows and that $r \ge \ell$, we can obtain the total number of good sets for each $r, \ell \le 16$. Finally, we can now use the formula given by \eqref{eq:coeffsum} to compute a formula for $b_4(\mc A_n)$.

We have implemented this in Python; the code can be found at \url{https://github.com/mihirsinghal/resonance-arrangement/}. The code takes under 30 seconds to run on a personal laptop using Python 3, and gives the following formula for $b_4(\mc A_n)$:

\begin{multline*}
b_4(\mc A_n) = \f{1}{48} (2 \cdot 16^n - 18 \cdot 12^n + 31 \cdot 10^n + 33 \cdot 9^n - 119 \cdot 8^n + 77 \cdot 7^n \\ + 77 \cdot 6^n - 173 \cdot 5^n + 155 \cdot 4^n - 113 \cdot 3^n + 64 \cdot 2^n - 16).
\end{multline*}

Note that this result actually allows us to compute one fewer term $|G(p,9)|$ in \cref{sec:seqs}, which substantially improves the running time of that program.

\section*{Acknowledgments}
We would like to thank Haynes Miller and Roman Bezrukavnikov for insightful discussion. We would also like to thank Sunil Chebolu for bringing the problem to our attention. Thanks also to Antoine Deza and Lionel Pournin for thoughtful discussions. Thanks to Lukas K\"uhne for pointing out a typo in the interpolated coefficients of the polynomial $\chi(\mc A_9; t)$.

\printbibliography[heading=bibintoc]
\end{document}